\documentclass[12pt,reqno,fleqn]{amsart}  
\usepackage{tikz}
\usepackage{amsmath,amstext,amsthm,amssymb,amsxtra}
\usepackage[top=1.5in, bottom=1.5in, left=1.25in, right=1.25in]	{geometry}
\usepackage{lmodern}

\usepackage{mathtools}
\mathtoolsset{showonlyrefs,showmanualtags}

\usepackage{hyperref} 
\hypersetup{
	colorlinks=true,       
	linkcolor=blue,          
	citecolor=magenta,        
	filecolor=magenta,      
	urlcolor=cyan           
}

\usepackage[msc-links]{amsrefs}


\newtheorem*{theorem}{Theorem}

\newtheorem{lemma}{Lemma}

\newtheorem{OldTheorem}{Theorem}
\theoremstyle{definition}
\newtheorem*{definition}{Definition}
\theoremstyle{definition}

\theoremstyle{remark}
\newtheorem{remark}{Remark}[section]

\def\ZZ{\ensuremath{\mathbb Z}}

\def\ZN{\ensuremath{\mathbb N}}

\def\ZD{\ensuremath{\mathcal D}}
\def\ZK{\ensuremath{\mathcal K}}
\def\ZR{\ensuremath{\mathbb R}}

\def\ZD{\ensuremath{\mathcal D}}

\def\md#1#2\emd{\ifx0#1
	\begin{equation*} #2 \end{equation*}\fi  
	\ifx1#1\begin{equation}#2\end{equation}\fi   
	\ifx2#1\begin{align*}#2\end{align*}\fi   
	\ifx3#1\begin{align}#2\end{align}\fi    
	\ifx4#1\begin{gather*}#2\end{gather*}\fi  
	\ifx5#1\begin{gather}#2\end{gather}\fi   
	\ifx6#1\begin{multline*}#2\end{multline*}\fi  
	\ifx7#1\begin{multline}#2\end{multline}\fi  
	\ifx8#1\begin{multline*}\begin{split}#2\end{split}\end{multline*}\fi
	\ifx9#1\begin{multline}\begin{split}#2\end{split}\end{multline}\fi
}
\newcommand {\e }[1]{\eqref{#1}}
\newcommand {\lem }[1]{Lemma \ref{#1}}

\newcommand {\trm }[1]{Theorem \ref{#1}}

\title[] {On a criterion of uniform distribution}
\author{Grigori A. Karagulyan}
\address{Institute of Mathematics of NAS of RA, Marshal Baghramian ave., 24/5, Yerevan, 0019, Armenia} 
\email{g.karagulyan@gmail.com}
\author{Iren A. Petrosyan}
\address{Faculty of Mathematics and Mechanics, Yerevan State
	University, Alex Manoogian, 1, 0025, Yerevan, Armenia} 
\email{iren.petrosyan1@edu.ysu.am}

\thanks{The work was supported by the Higher Education and Science Committee
	of RA, in the frames of the research project 21AG‐1A045}

\subjclass[2020]{11K06, 11J71}
\keywords{distribution modulo one, discrepancy, Diophantine approximation}

\begin{document}
\begin{abstract}
We give an extension of a criterion of van der Corput on uniform distribution of sequences. Namely, we prove that a sequence $x_n$ is uniformly distributed modulo  1 if it is weakly monotonic and satisfies the conditions $\Delta^2x_n\to 0,\quad n^2\Delta^2x_n\to \infty $. Our proof is straightforward and uses a Diophantine approximation by rational numbers, while van der Corput's approach is based on some estimates of exponential sums.
\end{abstract}
	\maketitle  
\section{Introduction}
Let $X=\{x_n,\,n=1,2,\ldots\}$ be a sequence of real numbers. Denote by $A_N(X,I)$ the counting function that is the number of elements $x_n$, $n=1,2,\ldots,N$,  which fractional parts $\{x_n\}$ are contained in certain interval $I=[a,b)\subset [0,1)$. A sequence $X$ is said to be uniformly distributed modulo 1 if
	\begin{equation}
		\lim_{N\to\infty}\frac{A_N(X,I)}{N}=b-a
	\end{equation}
for every interval $I\subset [0,1)$. The classical Weyl criterion (\cite{KN}*{page Theorem 2.1}) states that
a sequence $\{x_n\}$ is uniformly distributed if and only if for any integer $h\neq 0$ we have 
\begin{equation}
	\lim_{N\to\infty}\frac{1}{N}\sum_{k=1}^N\exp(2\pi ihx_k)=0.
\end{equation}
A classical example of a uniformly distributed sequence is $\{n\theta\}$, where $\theta$ is an irrational number, that immediately follows from Weyl's criterion. In fact, Weyl's criterion is a powerful tool in the study of uniform distribution of sequences. It enables to reduce many problems in the uniform distribution theory to estimates of exponential sums. In many criteria of uniform distribution properties of the first and higher order differences of sequences play significant role. Those are defined inductively as follows:
\begin{equation}
	\Delta x_n=x_{n+1}-x_n,\quad \Delta^{k+1} x_n=\Delta(\Delta^{k} x_n),\quad k=1,2,\ldots.
\end{equation}
The following criteria are well-known in the theory of uniform distribution.
\begin{OldTheorem}[van der Corput, \cite{VDC4}]\label{VC1}
	If a sequence $\{x_n\}$ satisfies the condition $\Delta x_n\to \theta$, where
$\theta$ is an irrational number, then $\{x_n\}$ is uniformly distributed.
\end{OldTheorem}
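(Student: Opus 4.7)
The plan is to verify Weyl's criterion: for every nonzero integer $h$, I need $N^{-1} S_N \to 0$, where $S_N = \sum_{k=1}^N \exp(2\pi i h x_k)$.

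The main idea is to subtract off the expected linear trend. Setting $y_n := x_n - n\theta$, the hypothesis $\Delta x_n \to \theta$ translates to $\Delta y_n \to 0$, and I can factor
$$S_N = \sum_{n=1}^N e^{2\pi i h n\theta}\cdot e^{2\pi i h y_n}.$$
Write $a_n := e^{2\pi i h n\theta}$ and $b_n := e^{2\pi i h y_n}$. The partial sums $T_n := \sum_{k=1}^n a_k$ of the pure rotation are uniformly bounded, $|T_n| \le |1 - e^{2\pi i h\theta}|^{-1} =: C_h < \infty$, the finiteness using that $h\theta\notin\ZZ$ for $h\ne 0$ since $\theta$ is irrational. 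On the other hand, $b_n$ is slowly varying in the sense that $|b_{n+1}-b_n|\le 2\pi|h|\,|\Delta y_n|$.

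I would then apply Abel summation,
$$S_N = T_N b_N - \sum_{n=1}^{N-1} T_n(b_{n+1}-b_n),$$
to obtain
$$|S_N| \le C_h + 2\pi|h|\,C_h\sum_{n=1}^{N-1}|\Delta y_n|.$$
Since $\Delta y_n \to 0$, its Cesàro average $N^{-1}\sum_{n=1}^{N-1}|\Delta y_n|$ also tends to zero, and therefore $S_N/N\to 0$, as required by Weyl's criterion.

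The argument is quite direct, and I do not foresee any real obstacle. The one conceptual step worth highlighting is the observation that subtracting the linear trend $n\theta$ converts the hypothesis into a condition that is controllable on Cesàro average; once this is noticed, the uniform boundedness of the pure rotation sums $T_n$ (which is exactly where the irrationality of $\theta$ enters, through $h\theta\notin\ZZ$) together with Abel summation finishes the proof in a few lines.
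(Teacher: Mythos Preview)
Your proof is correct. The only cosmetic slip is the bound $|T_n|\le |1-e^{2\pi i h\theta}|^{-1}$: the geometric-sum estimate actually gives $|T_n|\le 2|1-e^{2\pi i h\theta}|^{-1}$, but since you immediately absorb this into a constant $C_h$ the argument is unaffected.

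As for comparison: the paper does not give its own proof of this statement. Theorem~A is quoted as a classical result of van der Corput, and the authors simply remark that it ``can be proved using either Weyl's criterion or applying straightforward approach''. Your argument is precisely the Weyl-criterion route they allude to, carried out via the standard device of subtracting the linear trend $n\theta$ and Abel-summing against the bounded rotation sums. The paper's own machinery (the lemmas on discrepancy of monotone segments and the Diophantine-approximation splitting) is developed for the harder Theorem~\ref{T1} and is not invoked for Theorem~A.
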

\begin{OldTheorem}[van der Corput, \cite{VDC4}]\label{VC2}
	If a sequence $\{x_n\}$ satisfies the conditions 
	\begin{equation}
		\Delta^k x_n\searrow 0,\quad n\Delta^k x_n\to\infty \text{ as }n\to\infty
	\end{equation}
	for some $k\ge 1$, then it is uniformly distributed.
\end{OldTheorem}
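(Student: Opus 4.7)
The plan is to apply Weyl's criterion together with \emph{van der Corput's difference theorem}, which asserts that if for every positive integer $m$ the differenced sequence $(x_{n+m}-x_n)_{n\geq 1}$ is uniformly distributed modulo $1$, then so is $(x_n)$. The argument then proceeds by induction on $k\geq 1$.

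The base case $k=1$ is the classical theorem of Fej\'er: under $\Delta x_n\searrow 0$ and $n\Delta x_n\to\infty$, the sequence $(x_n)$ is uniformly distributed mod $1$. Its proof goes through Weyl's criterion, estimating the exponential sum $S_N(h)=\sum_{n=1}^N e^{2\pi i hx_n}$ for each fixed $h\neq 0$. One partitions $\{1,\ldots,N\}$ into blocks on which $\Delta x_n$ is essentially constant; inside such a block $(hx_n)$ resembles an arithmetic progression with common difference $\approx h\Delta x_n$, so the geometric-series bound contributes $O\bigl(1/(|h|\Delta x_n)\bigr)$. Summation over blocks, together with the assumption $n\Delta x_n\to\infty$, yields $S_N(h)=o(N)$.

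For the inductive step, assume the assertion at exponent $k-1\geq 1$ and suppose $\Delta^k x_n\searrow 0$, $n\Delta^k x_n\to\infty$. Fix $m\geq 1$ and set $y_n=x_{n+m}-x_n$. Iterating $\Delta$ one finds
\[
\Delta^{k-1}y_n=\sum_{i=0}^{m-1}\Delta^k x_{n+i},
\]
which is positive and admits the two-sided estimate $m\Delta^k x_{n+m-1}\leq \Delta^{k-1}y_n\leq m\Delta^k x_n$; hence $\Delta^{k-1}y_n\to 0$ and $n\Delta^{k-1}y_n\to\infty$. The telescoping identity $\Delta^k y_n=\Delta^k x_{n+m}-\Delta^k x_n\leq 0$ further shows that $\Delta^{k-1}y_n$ decreases monotonically, so $(y_n)$ meets the hypotheses at exponent $k-1$. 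By the inductive hypothesis $(y_n)$ is uniformly distributed mod $1$; since $m$ was arbitrary, van der Corput's difference theorem yields the uniform distribution of $(x_n)$.

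The principal obstacle is the base case itself: the step $k\to k-1$ is essentially formal once the difference theorem is in hand, but the exponential-sum estimate for Fej\'er's theorem is where the monotonicity of $\Delta x_n$ and the growth condition $n\Delta x_n\to\infty$ interact nontrivially, requiring careful control of the error in the block-wise arithmetic-progression approximation and a balance between the size of each block and the number of blocks.
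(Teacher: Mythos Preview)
The paper does not actually prove this statement: Theorem~B is stated in the introduction as a classical result of van der Corput with a citation to \cite{VDC4}, and the authors only remark that it ``can be proved using either Weyl's criterion or applying straightforward approach.'' There is therefore no proof in the paper to compare against; the paper's own contribution concerns the extension of Theorem~C.

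Your sketch is the standard argument and is correct in outline. The inductive step is sound: with $y_n=x_{n+m}-x_n$ the telescoping identity $\Delta^{k-1}y_n=\sum_{i=0}^{m-1}\Delta^k x_{n+i}$ and the monotonicity of $\Delta^k x_n$ yield both $\Delta^{k-1}y_n\searrow 0$ and $n\Delta^{k-1}y_n\to\infty$, and $\Delta^k y_n=\Delta^k x_{n+m}-\Delta^k x_n\le 0$ gives the required monotone decrease; van der Corput's difference theorem then closes the induction. The base case $k=1$ (Fej\'er's theorem) is where the real work lies, as you correctly identify; the block decomposition you describe is the usual route, and the monotonicity of $\Delta x_n$ is exactly what allows one to bound the number of blocks of a given size and to control the error in the arithmetic-progression approximation. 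This is precisely the ``Weyl's criterion'' approach the paper alludes to in its one-line remark.
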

\begin{OldTheorem}[van der Corput, \cite{VDC1}]\label{VC}
A sequence $\{x_n\}$ is uniformly distributed whenever
\begin{align}
	& \Delta x_n\to\infty,\quad\Delta^2 x_n \searrow 0,\\
	& \frac{n^2\Delta^2 x_n}{(\Delta x_n)^2}\to \infty\text{ as }n\to\infty.
\end{align}
\end{OldTheorem}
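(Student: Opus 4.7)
My plan is to verify Weyl's criterion: for every nonzero integer $h$ I must show that $S_N:=\sum_{n=1}^N e^{2\pi i h x_n}$ is $o(N)$. I first draw two simple consequences from the hypotheses that will drive the proof. On the one hand, $\Delta^2 x_n\searrow 0$ means $\Delta^2 x_n$ is positive, monotonically decreasing, and tends to $0$. On the other hand, $\Delta x_n\to\infty$ gives $(\Delta x_n)^2\to\infty$, and multiplying this by the hypothesis $n^2\Delta^2 x_n/(\Delta x_n)^2\to\infty$ yields $n^2\Delta^2 x_n\to\infty$.

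The main tool is van der Corput's fundamental inequality, which for $1\le H\le N$ reads
\begin{equation}
	H\,|S_N|^2\le(N+H)\Bigl(N+2\sum_{m=1}^{H-1}\bigl(1-\tfrac{m}{H}\bigr)|T_m|\Bigr),\qquad T_m:=\sum_{n=1}^{N-m}e^{2\pi i h(x_{n+m}-x_n)}.
\end{equation}
This moves the problem to estimating each shifted sum $T_m$. The phase $h(x_{n+m}-x_n)$ has first difference $h\sum_{j=0}^{m-1}\Delta^2 x_{n+j}$, which by the monotonicity of $\Delta^2 x_n$ is positive, monotonically decreasing in $n$, and trapped between $hm\Delta^2 x_{n+m-1}$ and $hm\Delta^2 x_n$. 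Since it eventually lies in $(0,1/2)$, its distance to the integers equals itself, and the Kuzmin--Landau lemma (van der Corput's A-process) gives
\begin{equation}
	|T_m|\ll\frac{1}{hm\,\Delta^2 x_N}.
\end{equation}
The finitely many initial indices where this interval is not yet below $1/2$ contribute only lower-order trivial error.

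Substituting and summing the harmonic series gives
\begin{equation}
	|S_N|^2\ll\frac{N^2}{H}+\frac{N\log H}{H\,\Delta^2 x_N}.
\end{equation}
It remains to pick $H$ so that both terms are $o(N^2)$. Taking $H$ comparable to $(\Delta^2 x_N)^{-1/2}$---which is $\le N$ once $N$ is large, thanks to $n^2\Delta^2 x_n\to\infty$---makes the first term $O(N^2(\Delta^2 x_N)^{1/2})=o(N^2)$ because $\Delta^2 x_N\to 0$. The second becomes $O(N(\Delta^2 x_N)^{1/2}\log(1/\Delta^2 x_N))$, and controlling it reduces to $N^2\Delta^2 x_N/\log^2(1/\Delta^2 x_N)\to\infty$. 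This is where the sharper form of the hypothesis intervenes: the factor $(\Delta x_n)^2$ in the denominator of $n^2\Delta^2 x_n/(\Delta x_n)^2$ supplies exactly the additional power of growth needed to absorb the logarithmic loss from the harmonic summation.

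The principal obstacle I anticipate is this final optimization. The A-process bound on $T_m$ is sharp but pays a logarithmic penalty after being summed over $m$, and it is precisely to absorb that penalty that one needs the stronger hypothesis $n^2\Delta^2 x_n/(\Delta x_n)^2\to\infty$ rather than merely $n^2\Delta^2 x_n\to\infty$. A quantitative bound on $\log(1/\Delta^2 x_n)$ in terms of $n$ (using the easy telescoping estimate $\Delta x_n=O(n)$, which follows from the monotonicity of $\Delta^2 x_n$) is needed to make this cancellation precise, and a clean application of Weyl's criterion then completes the proof.
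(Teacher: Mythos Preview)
Your approach via Weyl's criterion and exponential sums is the classical route and is \emph{genuinely different} from the paper's argument. The paper deduces Theorem~C from its main Theorem~1, whose proof avoids trigonometric sums entirely: it chops the sequence into blocks $\{x_j:\, n_k<j\le n_{k+1}\}$ chosen via a continued-fraction approximation of $\Delta x_{n_k}$, and bounds the discrepancy of each block by elementary counting lemmas (Lemmas~\ref{L5} and~\ref{L1}). Your proposal, by contrast, lives entirely inside van der Corput's exponential-sum machinery. That said, there is a real gap in your execution.

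The problem is that you first apply the differencing inequality and only \emph{then} invoke Kuzmin--Landau on each $T_m$. This route loses a factor $\log H$ when you sum $|T_m|\ll 1/(hm\,\Delta^2 x_N)$ over $m$, and your final paragraph does not actually close the resulting gap: you reduce to $N^2\Delta^2 x_N/\log^2(1/\Delta^2 x_N)\to\infty$, then assert that the extra $(\Delta x_n)^{-2}$ in the hypothesis absorbs the logarithm via ``$\Delta x_n=O(n)$''. But $\Delta x_n=O(n)$ only gives $\log(1/\Delta^2 x_N)=O(\log N)$, so you still need $N^2\Delta^2 x_N/(\log N)^2\to\infty$, and it is not clear from the hypotheses how $(\Delta x_N)^2$ controls $(\log N)^2$ (indeed $\Delta x_N$ may grow as slowly as $\log\log\log N$). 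In addition, the ``finitely many initial indices'' where $\Delta g_m(n)\ge 1/2$ are not finitely many uniformly in $m$: their number depends on $m$ and on your choice of $H$, and you have not bounded that contribution.

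The standard exponential-sum proof avoids both issues by applying the second-derivative estimate \emph{directly} to $S_N$, not to $T_m$. Using the form that requires only a one-sided bound $f''\ge\rho>0$ (Kuipers--Niederreiter, Theorem~2.7, or its discrete analogue) with $\rho=|h|\Delta^2 x_N$ gives
\[
\Bigl|\sum_{n\le N}e^{2\pi i h x_n}\Bigr|\ \ll\ \bigl(|h|\,\Delta x_N+1\bigr)\bigl((|h|\,\Delta^2 x_N)^{-1/2}+1\bigr)\ \ll\ \frac{\Delta x_N}{\sqrt{\Delta^2 x_N}},
\]
and the hypothesis $N^2\Delta^2 x_N/(\Delta x_N)^2\to\infty$ is then \emph{exactly} the statement that the right-hand side is $o(N)$. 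No logarithm appears and no separate handling of ``bad'' indices is needed. If you wish to keep the differencing framework, you must either prove the log-free second-derivative bound first (which is what the differencing step is normally used for, but with more care than a bare Kuzmin--Landau on $T_m$), or supply the missing quantitative comparison between $(\Delta x_N)^2$ and $\log^2 N$.
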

Theorems \ref{VC1} and \ref{VC2} can be proved using either Weyl's criterion or applying straightforward approach, while the proof of \trm{VC} follows from subtle quantitative estimates of certain exponential sums due to van der Corput \cite{VDC1} (see also \cite{KN}*{pp. 15-18}). In fact, van der Corput's method of exponential sums developed in papers \cite{VDC1, VDC2, VDC3, VDC4}  have had applications in many different number-theoretic problems (see \cite{GK, Kok, Tit}). In the present paper we give an extension of \trm {VC}, applying a different approach, without using trigonometric sums. Namely, we divide our sequence $x_n$ into segment groups $\{x_j:\, n_k<j\le n_{k+1}\}$ having $\varepsilon$-discrepancy, where integers $n_k$ satisfy the bound $n_k<n_{k+1}<n_k(1+\varepsilon)$ and are determined by an iteration as follows:  having $n_k$, we apply a suitable Diophantine approximation to the first difference $\Delta x_{n_{k}}$, then $n_{k+1}$ is defined depending on certain properties of this approximation. We use two key lemmas (Lemma \ref{L5} and \ref{L1}) in the estimations of discrepancies of segment groups. 
\begin{definition}
	A sequence of numbers $\{x_n\}$ is said to be weakly-decreasing if $x_n\ge 0$ and there exists a constant $\ZK\ge 1$ such that
	\begin{equation}
		\max_{j> k}x_j\le \ZK\min_{j\le k}x_j,\quad k=1,2,\ldots,
	\end{equation}
or equivalently $x_j\le \ZK x_n$ whenever $j>n$. We say that $\{x_n\}$ is weakly-increasing if $\{-x_n\}$ is weakly-decreasing.
\end{definition}
\begin{theorem}\label{T1}
	A sequence $\{x_n\}$ is uniformly distributed if $\Delta^2x_n$ is weakly-monotonic and satisfies the conditions 
	\begin{equation}\label{a}
		\Delta^2x_n\to 0,\quad n^2\Delta^2x_n\to \infty \text{ as }n\to\infty.
	\end{equation}
\end{theorem}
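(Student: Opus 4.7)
To show uniform distribution it suffices to show the discrepancy $D_N(X)\to 0$. Fix $\varepsilon>0$. Following the scheme announced in the introduction, the plan is to inductively construct indices $n_1<n_2<\cdots$ satisfying $n_k<n_{k+1}\le n_k(1+\varepsilon)$ such that each segment $S_k=\{x_j:\,n_k<j\le n_{k+1}\}$ has discrepancy at most $\varepsilon$. Once this is in hand, a routine concatenation of the blocks yields $\limsup_N D_N(X)\lesssim\varepsilon$, and letting $\varepsilon\to 0$ completes the proof.

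\textbf{Block construction via Diophantine approximation.} Given $n_k$, set $\theta_k=\Delta x_{n_k}$ and apply Dirichlet's theorem with some parameter $Q_k$: there exist coprime $a_k\in\ZZ$ and $q_k\in\{1,\dots,Q_k\}$ with $|\theta_k-a_k/q_k|\le 1/(q_kQ_k)$. The natural choice is $Q_k$ of order $(|\Delta^2x_{n_k}|)^{-1/2}$, which by the hypothesis $n^2\Delta^2x_n\to\infty$ is $o(n_k)$. I then take $L_k=n_{k+1}-n_k$ compatible with $L_k\le\varepsilon n_k$ and with $L_k^2\sup_{j\in S_k}|\Delta^2x_j|\ll 1$; the latter is achievable because $\Delta^2x_n\to 0$, and weak-monotonicity of $\Delta^2x_n$ transfers the single value $|\Delta^2x_{n_k}|$ to a uniform bound across $S_k$. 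With such an $L_k$, the Abel-type identity
\[
x_{n_k+m}=x_{n_k}+m\theta_k+\sum_{j=1}^{m-1}(m-j)\Delta^2x_{n_k+j-1},\qquad 1\le m\le L_k,
\]
gives $x_{n_k+m}=x_{n_k}+m\theta_k+r_{k,m}$ with $|r_{k,m}|$ far smaller than $\varepsilon$. Thus, modulo $1$, the block $S_k$ is a small perturbation of the arithmetic progression $\{x_{n_k}+m\theta_k\}_{m=1}^{L_k}$.

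\textbf{Discrepancy of the progression.} The discrepancy of this arithmetic progression reduces to that of $\{m\theta_k\bmod 1\}_{m=1}^{L_k}$, which I would analyse by splitting on the size of $q_k$. When $q_k$ is comparatively small, the orbit wraps many times around the uniform grid $\{j/q_k\}_{j=0}^{q_k-1}$, giving discrepancy $\lesssim 1/q_k+q_k/L_k$; when $q_k$ is comparable to $Q_k$, the slope $\theta_k$ is effectively irrational on the scale of the block and the orbit is close to equidistributed. These two regimes should be exactly what the key lemmas \lem{L5} and \lem{L1} quantify, producing an $\varepsilon$-bound on the discrepancy of $S_k$ in either case.

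\textbf{Main obstacle.} The delicate point is the calibration of $L_k$. One must verify that for every sufficiently large $k$ there exists a single $L_k$ simultaneously satisfying (i) $L_k\le\varepsilon n_k$, (ii) $L_k^2\sup_{j\in S_k}|\Delta^2x_j|\ll 1$, and (iii) either $q_k\ge 1/\varepsilon$ (so that the rational grid is already $\varepsilon$-fine) or $q_k\le\varepsilon L_k$ (so the orbit wraps many periods). Showing that the two assumptions $\Delta^2x_n\to 0$ and $n^2\Delta^2x_n\to\infty$, together with weak monotonicity, render this trilemma solvable for every large $k$, and that the perturbation $r_{k,m}$ does not destroy the small discrepancy obtained for the unperturbed progression, is the heart of the argument.
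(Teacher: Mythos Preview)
Your block construction and the regime $q_k\ge 1/\varepsilon$ are essentially the paper's Case~1, and that part is fine. The genuine gap is the small-$q_k$ regime. Your bound $1/q_k+q_k/L_k$ for the discrepancy of $\{m\theta_k\}_{m=1}^{L_k}$ is useless once $q_k<1/\varepsilon$: take the extreme case $q_k=1$, i.e.\ $\Delta x_{n_k}$ within $1/Q_k$ of an integer. Your constraint (ii), $L_k^2\Delta^2x_{n_k}\ll 1$, forces $L_k\ll Q_k$, so every $m\theta_k$ lies within $L_k/Q_k\ll 1$ of an integer; the arithmetic progression clusters near $0\bmod 1$ and has discrepancy close to~$1$. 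Nothing in the hypotheses excludes this: for $x_n=n^{3/2}$, say, there are arbitrarily large $n$ with $\Delta x_n$ as close to an integer as one likes. In this regime the linear term $m\theta_k$ carries no equidistribution at all; the equidistribution has to come from precisely the quadratic remainder $r_{k,m}$ that your scheme discards as a perturbation. So the trilemma (i)--(iii) cannot be solved as stated, and Lemmas~\ref{L5} and~\ref{L1} are not statements about arithmetic progressions.

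The paper's Case~2 is built around this. When $q\le 1/\varepsilon$ it does \emph{not} compare with an arithmetic progression. Instead it passes to the $q$-step subsequences $y_k(r)=x_{n+r+(k-1)q}-kp$ for each residue $1\le r\le q$, so that $\Delta y_k(r)=\alpha+\text{(accumulated }\Delta^2 x\text{)}$ with $|\alpha|=|q\Delta x_n-p|<\varepsilon^4$, while $\Delta^2 y_k(r)$ inherits weak monotonicity and the two-sided bounds from $\Delta^2 x$. Lemmas~\ref{L5} and~\ref{L1} are applied to these $y_k(r)$: they bound the discrepancy of a sequence with monotone $y_k$, $\Delta y_k$ (resp.\ weakly-monotone $\Delta^2 y_k$) in terms of its total spread and its maximal count on unit intervals. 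Because $\Delta y_k(r)$ may change sign at some index $h(r)$, three sub-cases are needed, with the block length $m$ chosen as roughly $n\varepsilon^2$, or $h(0)-2$, or an index where $y_m-y_{h(0)+1}\sim\varepsilon^{-4}$, depending on the size of $h(0)$. This case analysis, driven by the quadratic term, is the heart of the proof and is missing from your outline.
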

\begin{remark}
	The condition $n^2\Delta^2x_n\to \infty $ in the theorem is sharp, i.e. it may not be replaced by 
	\begin{equation}\label{x73}
		n^2\omega(n)\Delta^2x_n\to \infty 
	\end{equation}
	for any sequence $\omega(n)\to \infty$. Namely, the sequence $x_n=\log n$ is not uniformly distributed, but satisfies \e{x73} whenever $\omega(n)\to \infty$.
\end{remark}

\section{Notations and auxiliary lemmas}

\begin{lemma}\label{L3}
	Let $Y=\{y_1,y_2,\ldots,y_m\}$ be a sequence such that both $y_k$ and $\Delta y_k$ are monotonic. Then for any interval $[a,b]$ we have
	\begin{equation}\label{x27}
\# (Y\cap [a,b])\le \left(\frac{2(b-a)}{\min_k|\Delta^2 y_k|}\right)^{1/2}+2.
	\end{equation}
\end{lemma}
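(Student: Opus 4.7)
The plan is to exploit the two monotonicities to turn the problem into an arithmetic-progression lower bound for a telescoping sum. First I would reduce to the case $y_k$ non-decreasing, since replacing $y_k$ by $-y_k$ preserves both hypotheses and both sides of \e{x27}. Under this reduction, the set of indices $k$ with $y_k \in [a,b]$ is a \emph{consecutive} block $\{k_0, k_0+1, \ldots, k_0+n-1\}$, where $n = \#(Y \cap [a,b])$, because a non-decreasing sequence can enter and leave $[a,b]$ only once.

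Next I would set $\delta = \min_k |\Delta^2 y_k|$ and $u_j = \Delta y_{k_0+j}$ for $j = 0, 1, \ldots, n-2$. Since $y_k$ is non-decreasing, $u_j \ge 0$. The telescoping identity gives
\[
b - a \;\ge\; y_{k_0+n-1} - y_{k_0} \;=\; \sum_{j=0}^{n-2} u_j.
\]
Monotonicity of $\Delta y_k$ together with $|u_{j+1} - u_j| = |\Delta^2 y_{k_0+j}| \ge \delta$ forces consecutive differences of the $u_j$ to have a fixed sign and magnitude at least $\delta$. So I would split into two subcases: if $\Delta y$ is non-decreasing then $u_j \ge u_0 + j\delta \ge j\delta$, while if $\Delta y$ is non-increasing then (pushing from the far end, using $u_{n-2} \ge 0$) $u_j \ge u_{n-2} + (n-2-j)\delta \ge (n-2-j)\delta$. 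Either way,
\[
\sum_{j=0}^{n-2} u_j \;\ge\; \delta \sum_{l=0}^{n-2} l \;=\; \delta\,\frac{(n-1)(n-2)}{2}.
\]

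Combining the two displays yields $(n-2)^2 \le (n-1)(n-2) \le 2(b-a)/\delta$, hence $n \le \sqrt{2(b-a)/\delta} + 2$, which is precisely \e{x27}. The only mildly delicate step is the symmetric treatment of the two subcases for $\Delta y$ (increasing vs.\ decreasing), which uses the sign constraint $u_j \ge 0$ coming from $y_k$ being non-decreasing; this is not really an obstacle, just a bookkeeping point that ensures the quadratic lower bound holds in both directions.
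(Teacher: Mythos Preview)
Your proof is correct and follows essentially the same approach as the paper's: reduce by symmetry, observe that the indices landing in $[a,b]$ form a consecutive block, telescope $y_{\text{last}}-y_{\text{first}}$ into a sum of first differences, and then extract the quadratic lower bound $\tfrac{(n-1)(n-2)}{2}\delta$ from the bound on $|\Delta^2 y_k|$. The only cosmetic difference is that the paper reduces further to both $y_k$ and $\Delta y_k$ increasing (via $y_k\mapsto -y_{m+1-k}$ if needed) and uses an Abel-summation identity, whereas you keep the two subcases for $\Delta y$ and bound $u_j\ge j\delta$ or $u_j\ge (n-2-j)\delta$ directly; the computations are equivalent.
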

\begin{proof}
Without loss of generality we can suppose that both $y_k$ and $\Delta y_k$ are increasing and  so $\Delta^2 y_k\ge 0$. If $\# (Y\cap [a,b])\le 2$, then \e{x27} is immediate. So we can suppose $Y\cap [a,b]=\{y_{t+1},y_{t+2},\ldots,y_{t+s}\}$, $s\ge 3$. We have
\begin{align}
	b-a&\ge y_{t+s}-y_{t+1}\\
	&=\Delta y_{t+1}+\Delta y_{t+2}+\ldots+\Delta y_{t+s-1}\\
	&=(s-1)\Delta y_{t+1}+\sum_{j=1}^{s-2}(s-j-1)\Delta^2 y_{t+j}\\
	&\ge \frac{(s-2)(s-1)}{2}\cdot\min_k\Delta^2 y_k\\
	&\ge \frac{(s-2)^2}{2}\cdot \min_k\Delta^2 y_k,
\end{align}
which implies \e{x27}.
\end{proof}
\begin{lemma}\label{L6}
	If a sequence $Y=\{y_1,y_2,\ldots,y_m\}$ satisfies the conditions of \lem{L3}, then
	\begin{equation}\label{x26}
		\# Y=m\ge \left(\frac{2|y_m-y_1|}{\max \{\max_k|\Delta^2 y_k|,\min_k|\Delta y_k|\}}\right)^{1/2}.
	\end{equation}
\end{lemma}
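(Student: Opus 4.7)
The plan is to proceed in close analogy with the proof of \lem{L3}, but this time bounding the telescoping identity from above. First I reduce to the case where both $y_k$ and $\Delta y_k$ are increasing. If $y_k$ is decreasing I replace it by $-y_k$; if, after this, $\Delta y_k$ is still decreasing, I further replace $y_k$ by $\tilde y_k:=-y_{m+1-k}$, which is again increasing and whose first differences satisfy $\Delta\tilde y_k=\Delta y_{m-k}$, hence are now increasing in $k$. Both substitutions leave $|y_m-y_1|$ and the sets $\{|\Delta y_k|\}$, $\{|\Delta^2 y_k|\}$ invariant, so they preserve both sides of \e{x26}.

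Once this reduction is in place, $\Delta^2 y_k\ge 0$, $|y_m-y_1|=y_m-y_1$, and $\min_k|\Delta y_k|=\Delta y_1$. The same summation by parts used in the proof of \lem{L3} gives
\begin{equation}
y_m-y_1=(m-1)\Delta y_1+\sum_{j=1}^{m-2}(m-j-1)\Delta^2 y_j.
\end{equation}
Setting $M:=\max\{\max_k|\Delta^2 y_k|,\min_k|\Delta y_k|\}$, every summand on the right is nonnegative and dominated termwise by $M$ times its combinatorial coefficient, whence
\begin{equation}
|y_m-y_1|\le M\left[(m-1)+\frac{(m-1)(m-2)}{2}\right]=\frac{m(m-1)}{2}M\le\frac{m^2}{2}M.
\end{equation}
Solving for $m$ yields \e{x26} immediately.

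The calculation itself is entirely routine. The only step that requires genuine care is the preliminary reduction: one must notice that merely negating the sequence is not always sufficient, and that the index reversal $y_k\mapsto -y_{m+1-k}$ is needed to restore increasing first differences without destroying the increase of $y_k$. This is exactly what allows the identity of \lem{L3} to be reused verbatim and what forces $\min_k|\Delta y_k|$, rather than $\max_k|\Delta y_k|$, to appear in the denominator of \e{x26}.
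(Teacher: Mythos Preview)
Your proof is correct and follows essentially the same route as the paper's: reduce to the case where both $y_k$ and $\Delta y_k$ are increasing, apply the summation-by-parts identity $y_m-y_1=(m-1)\Delta y_1+\sum_{j=1}^{m-2}(m-j-1)\Delta^2 y_j$, and bound the right-hand side by $\tfrac{m(m-1)}{2}M\le\tfrac{m^2}{2}M$. The only difference is that you spell out the reduction (negation plus, when needed, the index reversal $y_k\mapsto -y_{m+1-k}$) explicitly, whereas the paper simply invokes the analogous reduction from \lem{L3}.
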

\begin{proof}
Likewise to the proof of the previous lemma we can suppose that both $y_k$ and $\Delta y_k$ are increasing. So we have $\min_k|\Delta y_k|=\Delta y_1$. Thus we obtain
	\begin{align}
	y_m-y_1&=(m-1)\Delta y_1+\sum_{j=1}^{m-2}(m-j-1)\Delta^2 y_j\\
		&\le \frac{m(m-1)}{2}\cdot \max \{\max_k|\Delta^2 y_k|,\Delta y_1\}\\
		&\le \frac{m^2}{2}\cdot \max \{\max_k|\Delta^2 y_k|,\min_k|\Delta y_k|\}
	\end{align}
and so \e{x26} follows.
\end{proof}
\begin{lemma}\label{L7}
		Let a sequence $Y=\{y_1,y_2,\ldots,y_m\}$ be such that both $y_k$ and $\Delta y_k$ are increasing. Then for any two intervals $J=[a,b)$ and $I=[c,d)$ with $J<I$ (i.e. $b\le c$) and $J\subset [y_1,y_m)$ we have
		\begin{equation}\label{x33}
			\frac{\#(Y\cap I)-1}{|I|}\le \frac{\#(Y\cap J)+1}{|J|}.
		\end{equation}
\end{lemma}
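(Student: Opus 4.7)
The idea is to compare both sides through a single gap $\Delta y_r$ whose index $r$ lies between the rightmost point of $Y\cap J$ and the leftmost point of $Y\cap I$; the monotonicity of $\Delta y_k$ then controls both ratios in the same direction.

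First, if $t:=\#(Y\cap I)\le 1$, the left-hand side of \e{x33} is non-positive and the inequality is trivial. So assume $t\ge 2$ and write $Y\cap I=\{y_{q+1},\ldots,y_{q+t}\}$. Because $\Delta y_k$ is non-decreasing,
\begin{equation*}
|I|\ge y_{q+t}-y_{q+1}=\sum_{j=q+1}^{q+t-1}\Delta y_j\ge (t-1)\Delta y_{q+1},
\end{equation*}
which gives $(t-1)/|I|\le 1/\Delta y_{q+1}$.

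Next I would bound $|J|$ from above. Set $s:=\#(Y\cap J)$. If $s\ge 1$, write $Y\cap J=\{y_{p+1},\ldots,y_{p+s}\}$ and decompose $|J|=(y_{p+1}-a)+\sum_{k=1}^{s-1}\Delta y_{p+k}+(b-y_{p+s})$. The middle sum is at most $(s-1)\Delta y_{p+s}$ by monotonicity. The right piece satisfies $b-y_{p+s}\le\Delta y_{p+s}$ since $y_{p+s+1}\ge b$. For the left piece, either $p\ge 1$ and $y_p<a$ (as $y_p\notin J$), so $y_{p+1}-a\le\Delta y_p\le\Delta y_{p+s}$, or $p=0$, in which case the hypothesis $J\subset[y_1,y_m)$ forces $a\ge y_1=y_{p+1}$ and the piece is zero. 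In every case $|J|\le(s+1)\Delta y_{p+s}$. If instead $s=0$, let $p$ be the largest index with $y_p\le a$ (which exists because $a\ge y_1$); then $J\subset[y_p,y_{p+1})$, so $|J|\le\Delta y_p=(s+1)\Delta y_p$.

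Finally, set $r:=p+s$ in the first case and $r:=p$ in the second. Since $J<I$, the point $y_r$ lies strictly to the left of $y_{q+1}$, hence $r\le q$ and monotonicity gives $\Delta y_r\le\Delta y_{q+1}$. Thus $|J|\le(s+1)\Delta y_{q+1}$, which combined with the earlier bound yields
\begin{equation*}
\frac{t-1}{|I|}\le\frac{1}{\Delta y_{q+1}}\le\frac{s+1}{|J|},
\end{equation*}
as required. The main obstacle is the left-boundary estimate on $|J|$ when $p=0$: this is precisely where the hypothesis $J\subset[y_1,y_m)$ must be invoked, and it is the only place where a naive splitting would fail because there is no $y_p$ to bound $y_{p+1}-a$ against.
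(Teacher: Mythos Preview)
Your proof is correct and follows essentially the same approach as the paper's: both arguments bound $(\#(Y\cap I)-1)/|I|$ above by the reciprocal of the smallest gap $\Delta y_j$ inside $I$, bound $|J|/(\#(Y\cap J)+1)$ above by the largest gap $\Delta y_j$ touching $J$, and then compare these two gaps via the monotonicity of $\Delta y_k$ and the relation $J<I$. Your write-up is slightly more streamlined in that you treat all $s\ge 1$ uniformly with the three-piece decomposition of $|J|$, whereas the paper splits into the separate sub-cases $s=0$, $s=1$, $s\ge 2$; but the underlying idea is identical.
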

\begin{proof}
	If $\#(Y\cap I)\le 1$, then \e{x33} is trivial. So we can suppose that $\#(Y\cap I)\ge 2$. One can check,
	\begin{equation}\label{x34}
		\frac{\#(Y\cap I)-1}{|I|}\le \frac{1}{\min_{[y_j,y_{j+1})\subset I}\Delta y_j}.
	\end{equation}
	If $\#(Y\cap J)=0$, then $J\subset [y_k,y_{k+1})$ for some $k$. Then we can write
	\begin{equation}
		|J|\le \Delta y_k\le \min_{[y_j,y_{j+1})\subset I}\Delta y_j
	\end{equation}
and combining this with \e{x34}, we obtain
\begin{equation}
	\frac{\#(Y\cap I)-1}{|I|}\le \frac{1}{|J|}=\frac{\#(Y\cap J)+1}{|J|}.
\end{equation}
If $\#(Y\cap J)=1$, then, applying the condition $J<I$ and that $\Delta y_k$ is increasing, we can write
\begin{equation}
	|J|\le 2\min_{[y_j,y_{j+1})\subset I}\Delta y_j.
\end{equation}
Thus, using also \e{x34}, we obtain
\begin{equation}
	\frac{\#(Y\cap I)-1}{|I|}\le \frac{2}{|J|}=\frac{\#(Y\cap J)+1}{|J|}.
\end{equation}
Hence, we can suppose $\#(Y\cap J)\ge 2$. One can check
\begin{equation}
	\frac{\#(Y\cap J)+1}{|J|}\ge \frac{1}{\max_{[y_j,y_{j+1})\cap J\neq \varnothing}\Delta y_j}\ge \frac{1}{\min_{[y_j,y_{j+1})\subset I}\Delta y_j}\ge \frac{\#(Y\cap I)-1}{|I|}.
\end{equation}
\end{proof}
Given a finite sequence $Y=\{y_1,y_2,\ldots,y_m\}$ and an interval $[a,b)\subset [0,1)$ denote by
\begin{align*}
	A(Y,[a,b))&=\#\{1\le k\le m:\, \{y_k\}\in [a,b)\}\\
	&=\sum_{j\in \ZZ}\#\{1\le k\le m:\, y_k\in [j+a,j+b)\}
\end{align*}
the number of elements of $Y$ including in the interval $[a,b)$ by modulo one. Define the discrepancy of the set $Y$ by 
\begin{align*}
	\ZD(Y)=\sup_{0\le a<b\le 1}\left|\frac{A(Y,[a,b))}{m}-(b-a)\right|.
\end{align*}
\begin{lemma} \label{L5}
	Let a sequence $Y=\{y_1,y_2,\ldots,y_m\}$, $m\ge 2$, satisfy the conditions of \lem{L3}.
	Then we have
	\begin{equation}\label{a12}
		\ZD(Y)\le 2\left(\frac{|y_m-y_1|}{m}+\frac{\max_{j\in \ZZ}\#(Y\cap [j,j+1))}{m}\right).
	\end{equation}
\end{lemma}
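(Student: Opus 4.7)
My approach will be to reduce the problem to prefix-count intervals $[0,t)$, partition the range of $Y$ into unit cells $[j,j+1)$, and apply \lem{L7} both inside a cell and across adjacent cells to control how many points lie in each prefix. Assume without loss of generality that both $y_k$ and $\Delta y_k$ are increasing; the three other monotonic patterns reduce to this via the substitutions $y_k\mapsto -y_k$ and $k\mapsto m+1-k$. Set $L=y_m-y_1$ and $M=\max_{j\in\ZZ}\#(Y\cap[j,j+1))$. By the identity $A(Y,[a,b))=A(Y,[0,b))-A(Y,[0,a))$, it is enough to establish $|A(Y,[0,t))-tm|=O(L+M)$ uniformly in $t\in[0,1]$; the claimed bound $\ZD(Y)\le 2(L/m+M/m)$ then follows from the triangle inequality.

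Let $J_0=\lfloor y_1\rfloor$, $J_1=\lfloor y_m\rfloor$, and for $j\in\{J_0,\ldots,J_1\}$ put $n_j=\#(Y\cap[j,j+1))$ and $q_j=\#(Y\cap[j,j+t))$. Then $\sum_j n_j=m$, $A(Y,[0,t))=\sum_j q_j$, and the number of contributing cells is at most $L+2$. For each interior cell (where $[j,j+t)\subset[y_1,y_m)$), \lem{L7} applied with $J=[j,j+t)$, $I=[j+t,j+1)$ rearranges to the lower bound $q_j\ge tn_j-1$. For those interior cells where additionally $[j-1,j)\subset[y_1,y_m)$, \lem{L7} applied with $J=[j-1,j)$, $I=[j,j+t)$ gives the upper bound $q_j\le tn_{j-1}+t+1$. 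Summing the lower bound yields $\sum_{\text{int}}(q_j-tn_j)\ge -(L+2)$; summing the upper bound telescopes $\sum tn_{j-1}$ against $\sum tn_j$, leaving an endpoint contribution of at most $t\cdot M$ plus a main term of size $(t+1)(L+2)$. The at-most-three boundary cells not covered by both inequalities each contribute $|q_j-tn_j|\le n_j\le M$, adding another $O(M)$. Collecting these estimates delivers $|A(Y,[0,t))-tm|=O(L+M)$.

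The principal obstacle is that \lem{L7} is inherently one-sided: it controls densities only from left to right (because $\Delta y_k$ is increasing), so the lower bound on $q_j$ is extracted from an application inside the cell while the upper bound must cross a cell boundary, and the asymmetry introduces correction terms of size $M$ both at the telescope's endpoint and at the few boundary cells. Matching the sharp constant $2$ of the stated inequality will require a careful choice of which cells count as interior and a tight accounting of the telescope endpoint and boundary contributions; any reasonable bookkeeping however yields the correct qualitative rate $\ZD(Y)=O((L+M)/m)$.
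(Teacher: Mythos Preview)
Your approach is sound and reaches the correct qualitative bound $\ZD(Y)\lesssim(L+M)/m$, but it differs from the paper's in two respects worth noting. First, the paper does \emph{not} reduce to star discrepancy: it works directly with an arbitrary interval $[a,b)$ and applies \lem{L7} with $J=U_{j-1}=[j-1,j)$ and $I=I_j=[j+a,j+b)$ to obtain $\#(Y\cap I_j)\le (b-a)\#(Y\cap U_{j-1})+2$, then sums over $j$; the lower bound is obtained symmetrically with $J=I_j$ and $I=U_{j+1}$. This direct route delivers the stated constant $2$ without a prefix reduction. Second, your lower bound uses an \emph{inside-cell} application of \lem{L7} (splitting $[j,j+1)$ into $[j,j+t)\cup[j+t,j+1)$), which is a nice alternative the paper does not use; the paper instead always compares a full unit cell to a subinterval of an adjacent cell. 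What your route buys is conceptual simplicity (one-parameter prefixes instead of two-parameter intervals); what it costs is the sharp constant, since the triangle inequality in $A(Y,[a,b))=A(Y,[0,b))-A(Y,[0,a))$ already doubles whatever constant you achieve for prefixes, and your boundary-cell and telescope-endpoint accounting adds further absolute terms. As you yourself observe, this only yields $\ZD(Y)\le C(L+M)/m$ with some $C>2$; that suffices for every application in the paper (where the lemma is invoked only under $\lesssim$), but it does not prove the lemma exactly as stated.
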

\begin{proof}
	We can suppose that both $y_k$ and $\Delta y_k$ are increasing. Choose $[a,b)\subset [0,1)$ and denote
	\begin{align}
		&M=\max_{k\in \ZZ}\#(Y\cap [k,k+1)),\label{x37}\\
		&I_j=[j+a,j+b),\quad  U_j=[j,j+1).\label{x38}
	\end{align}
	Let $[s,l)$ be the minimal integer-endpoints interval, containing the sequence $Y$.  It is clear 
	\begin{align}
		&l-s-2\le y_m-y_1,\label{x47}\\
		&U_j\subset [y_1,y_m),\quad s+1\le j\le l-2.
	\end{align}
Observe that $J=U_{j-1}$ and $I=I_j$ satisfy the conditions of \lem{L7}. Thus, from \e{x33} we conclude
	\begin{align}
		\#(Y\cap I_j)&\le 	(b-a)(\#(Y\cap U_{j-1})+1)+1\\
		&\le (b-a)\#(Y\cap U_{j-1})+2,\quad s+2\le j\le l-1\label{x46}
	\end{align}
	and similarly
	\begin{equation}
		\#(Y\cap I_j)\ge (b-a)\#(Y\cap U_{j+1})-2,\quad s+1\le j\le l-2.
	\end{equation}
	Applying \e{x47} and \e{x46}, we obtain
	\begin{align}
		\#\{1\le k\le m:\, &\{y_k\}\in [a,b)\}=\sum_{j=s}^{l-1}\#(Y\cap I_j)\\
		&\le (b-a)\sum_{j=s+2}^{l-1}\#(Y\cap U_{j-1})+2(l-s-2)\\
		&\qquad \qquad +\#(Y\cap I_{s})+\#(Y\cap I_{s+1})\\
		&\le(b-a)m+2(y_m-y_1)+\#(Y\cap I_{s})+\#(Y\cap I_{s+1})
	\end{align}
and therefore,
\begin{align}
&\frac{\#\{1\le k\le m:\, \{y_k\}\in [a,b)\}}{m}-(b-a)\\
&\qquad\qquad\qquad\le \frac{2(y_m-y_1)}{m}+\frac{2M}{m}.\label{x35}
\end{align}
Likewise, we can prove
\begin{align}
	&\frac{\#\{1\le k\le m:\, \{y_k\}\in [a,b)\}}{m}-(b-a)\\
	&\qquad\qquad\qquad\ge -\frac{2(y_m-y_1)}{m}-\frac{2M}{m},\label{x36}
\end{align}
and combining \e{x35} and \e{x36}, we get \e{a12}.
\end{proof}
\begin{remark}
	Observe that under the conditions of \lem{L5} we have 
\begin{equation}
		|y_m-y_1|=|\Delta y_1+\Delta y_2+\ldots+\Delta y_{m-1}|\le (m-1)\max_{k}|\Delta y_k|.
\end{equation}
So from \e{a12} we get
\begin{equation}\label{x68}
	\ZD(Y)\le 2\left(\max_k|\Delta y_k|+\frac{\max_{j\in \ZZ}\#(Y\cap [j,j+1))}{m}\right).
\end{equation}
\end{remark}
	In the next lemma we will use the following standard numerical inequality 
	\begin{equation}\label{x49}
		\sum_{k=1}^na_k\cdot \sum_{k=1}^nb_k\le n\sum_{k=1}^na_kb_k,
	\end{equation}
where $a_k$  and $b_k$, $k=1,2,\ldots,n$, are decreasing positive sequences. 

\begin{lemma}\label{L1}  Let $Y=\{y_1,y_2,\ldots,y_m\}$, $m\ge 3$, be a sequence such that $y_k$ is increasing and $\Delta^2 y_k$ is weakly-decreasing with a constant $\ZK$, then
\begin{equation}\label{x41}
\ZD(Y)\lesssim\frac{ y_m-y_1}{m}+\frac{\ZK}{\sqrt{y_m-y_1}}.
\end{equation}
\end{lemma}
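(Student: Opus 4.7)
The plan is to apply \lem{L5} to reduce the discrepancy estimate to controlling the largest unit block, and then to exploit the weakly-decreasing structure of $\Delta^2 y_k$ together with \lem{L3} to bound that quantity. \lem{L5} will give
\begin{equation*}
\ZD(Y) \le \frac{2(y_m-y_1)}{m} + \frac{2M}{m}, \qquad M := \max_{j\in\ZZ}\#\bigl(Y\cap[j,j+1)\bigr),
\end{equation*}
so since the first term already matches, the task reduces to proving $M \lesssim L + \ZK m/\sqrt{L}$ with $L:=y_m-y_1$. The bound is trivial when $M\le L$, so one may assume $M>L$.

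Since $\Delta^2 y_k \ge 0$ forces $\Delta y_k$ to be increasing, I take the extremal unit block to consist of $y_{t+1},\dots,y_{t+M}$ inside some $[j_0,j_0+1)$. \lem{L3} applied to this block yields $(M-2)^2\mu\le 2$, where $\mu:=\min_{t+1\le k\le t+M-2}\Delta^2 y_k$. The weak-monotonicity then forces $\Delta^2 y_k \ge \mu/\ZK$ for every $k\le t+M-2$, and iterated summation produces the quadratic growth estimate $y_{t+M-1}-y_1 \gtrsim (t+M)^2\mu/\ZK$. Using the monotonicity of $\Delta y_k$ to propagate the bound $\Delta y_k \ge (t+M-2)\mu/\ZK$ to all tail indices $k \ge t+M-1$ will give $y_m - y_{t+M} \gtrsim (m-t-M)(t+M)\mu/\ZK$. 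Adding these two estimates and using $(m-t-M)+(t+M)=m$ yields $L \gtrsim m(t+M)\mu/\ZK$, and in particular $M\mu \lesssim \ZK L/m$.

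The final step is to combine the \lem{L3} bound $M^2\mu \lesssim 1$ with $M\mu\lesssim \ZK L/m$ to derive $M^2 L \lesssim \ZK^2 m^2$, equivalent to $M \lesssim \ZK m/\sqrt{L}$. The Chebyshev-type inequality \e{x49}, applied to the weakly-monotonic sequences that arise in the block-level bookkeeping, should furnish the coupling needed to extract the correct square-root interpolation between the two upper bounds on $\mu$. This is the main technical obstacle: both constraints bound $\mu$ from above, so a direct algebraic combination only yields the lower bound $M \gtrsim m/(\ZK L)$; the careful use of \e{x49}, likely in conjunction with a case split according to the location of the extremal block within $\{1,\dots,m\}$, is what should deliver the correct upper bound on $M$ and close the argument.
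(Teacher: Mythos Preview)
Your proposal has a genuine gap, and you explicitly acknowledge it: after deriving $M^2\mu\lesssim 1$ (from \lem{L3}) and $M\mu\lesssim \ZK L/m$ (from the quadratic-growth estimate), you observe that both are upper bounds on $\mu$ and hence cannot be combined to give an upper bound on $M$. That observation is correct, and the hand-wave that ``the careful use of \e{x49}, likely in conjunction with a case split'' will close the argument is not a proof. Indeed, the two inequalities you have are compatible with $\mu$ arbitrarily small and $M$ arbitrarily large, so no amount of algebraic manipulation of those two relations alone can yield $M\lesssim \ZK m/\sqrt{L}$. (Your parenthetical claim that they at least give a lower bound $M\gtrsim m/(\ZK L)$ is also unfounded: from $M^2\mu\le C_1$ and $M\mu\le C_2$ nothing follows about $M$ in either direction.)

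The paper's argument avoids this trap by working in the opposite direction: rather than bounding $\mu$ from above, it bounds $y_m-y_1$ from above in terms of $(m/p)^2$. First, \lem{L7} is used to show that the extremal unit block is essentially the \emph{first} one (so your parameter $t$ may be taken to be $0$, and $p:=\#(Y\cap[y_1,y_1+1))$ plays the role of $M$). Then one writes out the exact expansion
\[
y_m-y_1=(m-1)\Delta y_1+\sum_{i=1}^{m-2}(m-i-1)\Delta^2 y_i
\]
and splits the sum at $i=p-2$. The tail is controlled by the weak-decreasing bound $\Delta^2 y_i\le \ZK\Delta^2 y_{p-2}$ together with the inequality $\Delta^2 y_{p-2}\lesssim \ZK/p^2$ coming from $y_p-y_1<1$. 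The initial segment is controlled using \e{x49} applied to the decreasing sequences $(p-j-1)$ and $\min_{i\le j}\Delta^2 y_i$, which yields $\sum_{j\le p-2}\Delta^2 y_j\lesssim (\ZK/p)(y_p-y_1)\lesssim \ZK/p$. Assembling these pieces gives $y_m-y_1\lesssim \ZK^2(m/p)^2$, i.e.\ $p\lesssim \ZK m/\sqrt{y_m-y_1}$, which is exactly what is needed. The role of \e{x49} is thus quite specific---it converts the bound $y_p-y_1<1$ on a weighted sum of the $\Delta^2 y_j$ into a bound on the unweighted sum---and is not the device you conjecture it to be.
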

\begin{proof}
Without loss of generality we can suppose that $y_1=0$. From the conditions of the lemma it also follows that $\Delta y_k$ is positive and increasing.
According to \lem{L5}, it is enough to show that 
\md1\label{a3}
\#(Y\cap U_j)\lesssim \frac{m\ZK}{\sqrt {y_m}},\text{ for all } j=0,1,\ldots,l=[y_m],
\emd
where $U_j=[j,j+1)$. Since both $y_k$ and $\Delta y_k$ are increasing, from \lem{L7} it follows that $\#(Y\cap U_j)\le \#(Y\cap U_0)+2$ and so
\begin{equation*}
	\max_{j\in \ZZ}\#(Y\cap U_j)\le \#(Y\cap U_0)+2\le 3 \#(Y\cap U_0).
\end{equation*}
Hence, we need to show that 
\begin{equation}\label{x39}
	\#(Y\cap U_0)\lesssim \frac{m\ZK}{\sqrt {y_m}}.
\end{equation}
For $k\ge 2$ we have
\begin{align}
	y_k&=\Delta y_1+\Delta y_2+\ldots+\Delta y_{k-1}\\
	&=(k-1)\Delta y_1+(k-2)\Delta^2 y_1+\ldots+\Delta^2 y_{k-2}\label{x40}
\end{align}
(if $k=2$, then the last sum is simply $\Delta y_1$). Without loss of generality we can suppose that
\begin{equation}\label{x42}
	\Delta y_1<1/3,\quad \Delta^2 y_1<1/3.
\end{equation}
Indeed, if one of these bounds doesn't hold, we would have $y_m\gtrsim m$ according to \e{x40} and the condition $m\ge 3$. So bound \e{x41} will be trivially satisfied, since for the left hand side we always have $\ZD(Y)\le 1$.
Denote
\begin{equation}
	\mu_0=\max\{\Delta y_1,\Delta^2 y_1\},  \mu_k= \Delta^2 y_k,\quad k\ge 1.  
\end{equation}
Since $\Delta^2 y_k$ is weakly-decreasing, so we have for the sequence $\mu_0,\mu_1,\ldots$ with the same constant $\ZK$ as $\Delta^2 y_k$ has. Consider the modified sequence 
\begin{equation}
	z_1=y_1=0,\quad z_k=(k-1)\mu_0+(k-2)\mu_1+\ldots+\mu_{k-2},\quad k\ge 2.
\end{equation}
It is easy to verify that
\begin{equation}
	y_k\le z_k\le 3y_k,\quad k\ge 3.
\end{equation}
Now suppose that
\begin{equation*}
	Y\cap U_0=\{y_1,y_2,\ldots,y_p\}
\end{equation*}
that means $y_{p}<1\le y_{p+1}$. From \e{x42} it follows that $p\ge 3$ and since $\mu_j$ is weakly decreasing we can write
\md1\label{x50}
z_{p}\ge \mu^*_{p-2}\sum_{j=0}^{p-2}(p-j-1)\ge \frac{p(p-1)}{2\ZK}\cdot \mu_{p-2},
\emd
where $\mu_k^*=\min_{j\le k}\mu_j$. Besides, applying \e{x49}, we get
\begin{equation}\label{x51}
	\sum_{j=0}^{p-2}\mu_j\le\ZK\sum_{j=0}^{p-2}\mu_j^*\le\frac{2\ZK }{p}\sum_{j=0}^{p-2}(p-j-1) \mu_j^*\le\frac{2\ZK }{p}\sum_{j=0}^{p-2}(p-j-1) \mu_j.
\end{equation}
Therefore, applying \e{x50} and \e{x51}, we get
\md2
z_m&=\sum_{j=0}^{m-2}(m-j-1)\mu_j=\sum_{j=0}^{p-2}(m-j-1)\mu_j+\sum_{j=p-1}^{m-2}(m-j-1)\mu_j\\
&=\sum_{j=0}^{p-2}(p-j-1)\mu_j+(m-p)\sum_{j=0}^{p-2}\mu_j+\sum_{j=p-1}^{m-2}(m-j-1)\mu_j\\
&\le \sum_{j=0}^{p-2}(p-j-1)\mu_j+\frac{2\ZK(m-p)}{p}\sum_{j=0}^{p-2}(p-j-1)\mu_j+\ZK\mu_{p-2}\sum_{j=p-1}^{m-2}(m-j-1)\\
&\le z_{p}+\frac{2\ZK mz_{p}}{p}+\frac{2\ZK^2 z_{p}}{p(p-1)}\cdot \frac{m(m-1)}{2}\lesssim \ZK^2 \left(\frac{m}{p}\right)^2,
\emd
where we also used the facts that $p\ge 3$ and $z_p\le 3y_p<3$. This immediately implies 
\begin{equation}
\#(Y\cap U_0)=p\lesssim \frac{m}{\sqrt{z_m/\ZK^2}}\le  \frac{m\ZK}{\sqrt{y_m}}
\end{equation}
that gives \e{x39}, completing the proof of lemma.
\end{proof}

\begin{lemma}\label{L2}
	Let sequences  $X=\{x_k:\,k=1,2,\ldots, m\}$ and $Y=\{y_k:\,k=1,2,\ldots, m\}$ satisfy $|x_k-y_k|<\varepsilon$. Then
	\begin{equation*}
		\ZD(Y)\le \ZD(X)+2\varepsilon.
	\end{equation*}
\begin{proof}
	Suppose $\ZD(X)=\delta$ and $I=[a,b)\subset [0,1)$ is an arbitrary interval. We need to show
	\begin{equation}\label{x52}
		\left|\frac{A(Y,I)}{m}-|I|\right|\le \delta+2\varepsilon.
	\end{equation}
Consider the $I^*=[a-\varepsilon,b+\varepsilon)$ and let $I^{**}=\{\{x\}:\, x\in I^*\}$.
We have 
\begin{align}
	&|I^{**}|= \min\{|I^*|,1\}\le |I^*|,\\ 
	&\cup_{k\in \ZZ}(Y\cap (k+I))\subset \cup_{k\in \ZZ}(X\cap (k+I^*)=\cup_{k\in \ZZ}(X\cap (k+I^{**}).
\end{align}
So, using the definition of discrepancy, we obtain 
\begin{align}
	A(Y,I)&=\#\left(\cup_{k\in \ZZ}(Y\cap (k+I))\right)\le\#\left(\cup_{k\in \ZZ}(X\cap (k+I^*))\right)\\
	&= A(X,I^{**})\le m(|I^*|+\delta )=m(|I|+\delta+2\varepsilon).\label{x53}
\end{align}
 To show the lower bound first suppose that $|I|>\delta+2\varepsilon$ and consider the interval $I_*=[a+\varepsilon, b-\varepsilon)$.
One can similarly show
\begin{equation}\label{x54}
A(Y,I)\ge m(|I|-\delta-2\varepsilon).
\end{equation}
It remains just note that in the case of $|I|\le \delta+2\varepsilon$ the last inequality trivially holds. Combining \e{x53} and \e{x54}, we get \e{x52}, completing the proof of lemma.
\end{proof}
\end{lemma}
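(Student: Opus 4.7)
The plan is to fix an arbitrary test interval $I = [a,b) \subset [0,1)$ and sandwich $A(Y,I)$ between counts of $X$ against a slightly enlarged and a slightly shrunk version of $I$; applying the hypothesis $\ZD(X) = \delta$ to those comparison intervals then yields the required estimate.

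For the upper bound, I would observe that $\{y_k\} \in [a,b)$ means $y_k \in [j+a, j+b)$ for some integer $j$, and then $|x_k - y_k| < \varepsilon$ forces $x_k \in (j+a-\varepsilon, j+b+\varepsilon)$. Equivalently, $\{x_k\}$ lies in the mod-$1$ reduction $I^{**}$ of the enlarged interval $I^* = [a-\varepsilon, b+\varepsilon)$, whose length satisfies $|I^{**}| \le \min(|I^*|,1) \le |I| + 2\varepsilon$. This gives the set inclusion $\cup_{j\in \ZZ}(Y \cap (j+I)) \subset \cup_{j\in \ZZ}(X \cap (j+I^{**}))$, so
\begin{equation*}
A(Y,I) \le A(X, I^{**}) \le m(|I^{**}| + \delta) \le m(|I| + 2\varepsilon + \delta).
\end{equation*}

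For the lower bound, a symmetric argument would use the shrunk interval $I_* = [a+\varepsilon, b-\varepsilon)$: whenever $\{x_k\} \in I_*$ one has $\{y_k\} \in I$, so $A(X, I_*) \le A(Y,I)$, and $\ZD(X) = \delta$ then delivers $A(Y,I)/m \ge |I_*| - \delta = |I| - 2\varepsilon - \delta$. This argument is only meaningful when $|I| > 2\varepsilon$; in the complementary case $|I| \le 2\varepsilon$ the claimed bound is non-positive and thus trivially satisfied by $A(Y,I)/m \ge 0$. Combining the two directions and taking the supremum over $I$ yields $\ZD(Y) \le \delta + 2\varepsilon$. The only point that demands care is the mod-$1$ wrap-around when enlarging the interval — it is handled cleanly by defining $I^{**}$ as the reduction of $I^*$ modulo the integers and using $|I^{**}| \le |I^*|$, after which the inclusions among unions of integer translates remain valid.
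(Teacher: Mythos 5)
Your proposal is correct and follows essentially the same route as the paper: enlarging $I$ to $I^*=[a-\varepsilon,b+\varepsilon)$, reducing mod $1$ to $I^{**}$ for the upper bound, and shrinking to $I_*=[a+\varepsilon,b-\varepsilon)$ for the lower bound with a trivial small-interval case. The only (harmless) difference is that you dispatch the trivial case when $|I|\le 2\varepsilon$ while the paper uses $|I|\le\delta+2\varepsilon$; both work.
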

\begin{lemma}\label{L4}
	Let $\{x_k\}$ be an infinite sequence and $\varepsilon >0$. If for any $n\ge n_0$ there is an integer $m\le n\varepsilon $ such that 
	\begin{equation}\label{x23}
		\ZD\{x_k:\, n<k\le n+m\}\le \varepsilon,
	\end{equation}
then 
\begin{equation}\label{x78}
	\limsup_{N\to\infty }\ZD\{x_k:\, 1\le k\le N\}\le 2\varepsilon.
\end{equation}
\end{lemma}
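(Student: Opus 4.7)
My plan is to iterate the hypothesis to cover the interval $(n_0, N]$ by consecutive good blocks, and then control the initial segment and leftover tail separately.

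First, I would construct inductively a sequence $n_0 < n_1 < n_2 < \ldots$ as follows: given $n_k$, invoke the hypothesis at $n = n_k$ to obtain an integer $m_k \le n_k\varepsilon$ with $\ZD\{x_j : n_k < j \le n_k + m_k\} \le \varepsilon$, and set $n_{k+1} = n_k + m_k$. Then $n_k < n_{k+1} \le n_k(1+\varepsilon)$, so $n_k \to \infty$.

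Next I would record the elementary sub-additivity fact: if finite sequences $Y_1,\dots,Y_s$ (with $|Y_j|=\mu_j$) each satisfy $\ZD(Y_j)\le\varepsilon$, then for any interval $I\subset[0,1)$,
\begin{equation*}
A(Y_1\cup\cdots\cup Y_s,I)=\sum_{j=1}^s A(Y_j,I)\in\bigl[(|I|-\varepsilon)M,\,(|I|+\varepsilon)M\bigr],\quad M=\sum_j\mu_j,
\end{equation*}
so the combined sequence also has discrepancy $\le\varepsilon$. This is immediate from the definition and I would just state it.

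Given large $N$, I would pick $k$ with $n_k\le N<n_{k+1}$ and write $\{x_1,\dots,x_N\}$ as the disjoint union of the initial block $B_0=\{x_j:1\le j\le n_0\}$ (size $n_0$), the block $B=\{x_j:n_0<j\le n_k\}$ obtained by concatenating the $k$ good segments (so $\ZD(B)\le\varepsilon$ by the step above), and a tail $T=\{x_j:n_k<j\le N\}$ of size $N-n_k\le n_{k+1}-n_k\le n_k\varepsilon\le N\varepsilon$. For an arbitrary interval $I\subset[0,1)$, trivial bounds give
\begin{equation*}
(|I|-\varepsilon)(n_k-n_0)\le A(\{x_1,\ldots,x_N\},I)\le n_0+(|I|+\varepsilon)(n_k-n_0)+(N-n_k).
\end{equation*}
Dividing by $N$ and using $n_k\ge N/(1+\varepsilon)$, $n_0/N\to 0$, and $(N-n_k)/N\le\varepsilon$, the upper estimate yields $A/N-|I|\le 2\varepsilon+o(1)$. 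For the lower estimate I get $A/N\ge(|I|-\varepsilon)/(1+\varepsilon)-o(1)$, so
\begin{equation*}
\frac{A}{N}-|I|\ge-\frac{(1+|I|)\varepsilon}{1+\varepsilon}-o(1)\ge -2\varepsilon-o(1),
\end{equation*}
since $|I|\le 1$. Taking the supremum over $I$ and letting $N\to\infty$ gives \eqref{x78}.

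The only delicate point is the lower bound: throwing away the tail $T$ and the initial block $B_0$ costs a factor $n_k/N\ge 1/(1+\varepsilon)$, and it needs to be checked that this factor combines with the $-\varepsilon$ from $\ZD(B)\le\varepsilon$ to produce a clean $-2\varepsilon$ (rather than something larger like $-\varepsilon/(1+\varepsilon)-\varepsilon$, which would also work but requires the same bookkeeping). This is routine once written out carefully, so I do not anticipate a real obstacle.
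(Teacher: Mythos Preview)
Your proof is correct and follows essentially the same route as the paper: build the sequence $n_0<n_1<\cdots$ with $n_{j+1}\le(1+\varepsilon)n_j$ and $\ZD\{x_k:n_j<k\le n_{j+1}\}\le\varepsilon$, then for $N$ between $n_k$ and $n_{k+1}$ split $\{x_1,\dots,x_N\}$ into the initial segment, the good blocks, and the short tail, and bound the counting function from above and below. The only cosmetic difference is that you package the concatenation step as a ``sub-additivity'' observation (the paper states this separately as \lem{L8}) whereas the paper sums the block contributions directly; the arithmetic in the lower bound, including the factor $1/(1+\varepsilon)$ and the reduction to $-2\varepsilon$ via $|I|\le1$, matches the paper's computation.
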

\begin{proof}
According to the conditions of lemma, we can find a sequence of integers $n_0<n_1<n_2\ldots<n_k<\ldots$ such that $n_j<n_{j+1}\le (1+\varepsilon) n_j$ and 
\begin{equation*}
		\ZD\{x_k:\, n_j<k\le n_{j+1}\}\le \varepsilon.
\end{equation*}
We claim 
\begin{equation}\label{x76}
	\limsup_{N\to\infty }\frac{\#\{1\le k\le N:\, \{x_k\}\in [a,b)\}}{N}\le b-a+2\varepsilon
\end{equation}
for any interval $[a,b)\subset [0,1)$. If $b-a+\varepsilon\ge  1$, then \e{x76} trivially holds. So we can assume that $b-a+\varepsilon\le 1$ and for $n_m<N\le n_{m+1}$ we can write
\begin{align}
\#\{1\le k\le N:\, \{x_k\}\in [a,b)\}&\le n_0+\sum_{j=0}^{m}\#\{\{x_k\}\in [a,b):\, n_j<k\le n_{j+1}\}\\
&\le n_0+ (b-a+\varepsilon)\sum_{j=0}^{m}(n_{j+1}-n_j)\\
&\le  n_0+(b-a+\varepsilon)N+(n_{m+1}-n_m)\\
&\le  n_0+(b-a+\varepsilon)N+n_m\varepsilon\\
&<  n_0+(b-a+\varepsilon)N+N\varepsilon,
\end{align}
that implies \e{x76}. To prove the lower bound
\begin{equation}\label{x77}
	\liminf_{N\to\infty }\frac{\#\{1\le k\le N:\, \{x_k\}\in [a,b)\}}{N}\ge  b-a-2\varepsilon,
\end{equation}
we can suppose that $b-a-\varepsilon\ge 0$, then \e{x77} follows from 
\begin{align}
	\#\{1\le k\le N:\, \{x_k\}\in [a,b)\}&\ge \sum_{j=0}^{m-1}\#\{\{x_k\}\in [a,b):\, n_j<k\le n_{j+1}\}\\
	&\ge (b-a-\varepsilon)\sum_{j=0}^{m-1}(n_{j+1}-n_j)\\
	&\ge  (b-a+\varepsilon)n_m\\
	&\ge  \frac{(b-a-\varepsilon)N}{1+\varepsilon}\\
	&\ge (b-a-2\varepsilon)N.
\end{align}
Combining \e{x76} and \e{x77}, we get \e{x78}.
\end{proof}
\begin{lemma}\label{L8}
	Let $Y_k\subset \ZR$, $k=1,2,\ldots,m$, be finite sequences and $Y$ be a sequence obtained by an arbitrary ordering of the elements of the sequences $Y_k$. Then,
	\begin{equation}\label{x55}
		\ZD(Y)\le \max_k\ZD(Y_k).
	\end{equation}
\end{lemma}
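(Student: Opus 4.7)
The plan is to observe that the discrepancy $\ZD$, as defined in the paper, depends only on the multiset of fractional parts of the sequence and not on the order in which elements appear. So without loss of generality I can treat $Y$ as the disjoint union of the multisets $Y_1,\ldots,Y_m$, which gives the basic additive identities
\[
|Y|=\sum_{k=1}^m |Y_k|,\qquad A(Y,I)=\sum_{k=1}^m A(Y_k,I)
\]
for every interval $I=[a,b)\subset[0,1)$.

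From here the proof is a one-line convex combination argument. Set $\delta=\max_k \ZD(Y_k)$ and $m_k=|Y_k|$. By definition of discrepancy, for every $k$ and every $I=[a,b)\subset[0,1)$,
\[
\bigl|A(Y_k,I)-m_k(b-a)\bigr|\le m_k\delta.
\]
Summing in $k$ and applying the triangle inequality yields
\[
\bigl|A(Y,I)-|Y|(b-a)\bigr|\le \sum_{k=1}^m m_k\delta=|Y|\,\delta,
\]
which, after dividing by $|Y|$ and taking the supremum over $I$, gives exactly \e{x55}.

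There is no real obstacle here; the only point worth flagging is the preliminary remark that the definition of $\ZD$ is invariant under reordering (and in fact under any bijection of index sets preserving fractional parts with multiplicity), so the phrase ``arbitrary ordering'' in the statement is inessential and I need not track any particular interleaving of the $Y_k$.
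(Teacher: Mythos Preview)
Your proof is correct and is essentially the same convex-combination/triangle-inequality argument the paper gives: one uses $A(Y,I)=\sum_k A(Y_k,I)$ and $\#Y=\sum_k\#Y_k$, bounds each summand by $\#Y_k\cdot\ZD(Y_k)$, and then by $\#Y_k\cdot\max_k\ZD(Y_k)$. The only cosmetic difference is that the paper writes the estimate as a weighted average $\sum_k\frac{\#Y_k}{\#Y}\bigl|\frac{A(Y_k,I)}{\#Y_k}-(b-a)\bigr|$ rather than summing the unnormalized bounds first, but the content is identical.
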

\begin{proof}
	Let $[a,b)\subset [0,1)$ be an arbitrary interval. For a given $m\in \ZN$ we can write
	\begin{align*}
		\left|\frac{A(Y,[a,b))}{\# Y}-(b-a)\right|&=\left|\frac{\sum_kA(Y_k,[a,b))}{\#Y}-(b-a)\right|\\
		&\le \sum_k\frac{\#Y_k}{\#Y}\left|\frac{A(Y_k,[a,b))}{\#Y_k}-(b-a)\right|\\
		&\le \sum_k\frac{\#Y_k}{\#Y}\ZD(Y_k)\\
		&\le \max_k\ZD(Y_k).
	\end{align*}
This implies \e{x55}.
\end{proof}
\section{Proof of Theorem}

\begin{proof}
Without loss of generality we can suppose that $\Delta^2 x_n$ is weakly-decreasing. Let 
\begin{equation}\label{x72}
	0<\varepsilon<1/10
\end{equation}
be fixed. Using \e{a}, we find an integer $n(\varepsilon)$ such that
	\begin{equation}\label{x5}
	\frac{1}{\varepsilon^8 n^2}\le \Delta^2x_n<\varepsilon^{12} \text{ for all }n>n(\varepsilon)>\varepsilon^{-5}.
\end{equation}
Then we fix $n>n(\varepsilon)$. According to \lem{L4} it is enough to prove that there exists an integer $m\le 2n\varepsilon$, such that
\begin{equation}\label{x24}
	\ZD\{x_k:\, n<k\le n+m\}\lesssim \varepsilon.
\end{equation}
 Suppose that $p_k/q_k$ is the sequence of continued fraction approximation of $\Delta x_n$. Namely, there  are sequences of coprime integers $p_k$ and $q_k$ such that
\begin{align}	
	&\left|\Delta x_n-\frac{p_{k}}{q_{k}}\right|\le \frac{1}{q_{k}q_{k+1}},\label{a7}\\
	&1=q_0<q_1<q_2<\ldots
\end{align}
(see for example \cite{Buc} chap. 24 or \cite{EiTh} chap. 3). We may find a unique integer $k=k(n)$ such that $q_{k}\le \varepsilon^{-4}<q_{k+1}$. Denote
$p=p_k$, $q=q_{k}$, $q'=q_{k+1}$. Hence we have
\begin{align}
	&\left|\Delta x_n-\frac{p}{q}\right|\le \frac{1}{qq'},\label{a6}\\
	&q\le\varepsilon^{-4}<q'.\label{a5}
\end{align}
To prove \e{x24} we will consider two possible cases of $q$. Besides, for the second case three different sub-cases will be discussed.

{\bf Case 1: $q>\varepsilon^{-1}$.} For any $0\le j\le q$ we have
\begin{equation*}
	\Delta x_{n+j}=\Delta x_n+\sum_{i=n}^{n+j-1}\Delta^2 x_i
\end{equation*}
and so, using \e{x5}, \e{a6} and \e{a5}, we obtain
\begin{align}
	\left|\Delta x_{n+j}-\frac{p}{q}\right|&\le  \left|\Delta x_n-\frac{p}{q}\right|+\sum_{i=n}^{n+q-1}\Delta^2 x_i\le \frac{1}{qq'}+q\varepsilon^{12}\\
	&\le\frac{1}{q^2}+q\cdot \frac{1}{q^3}=\frac{2}{q^2}.
\end{align}
Therefore,
\begin{align}
	\left|x_{n+j}-x_n-\frac{jp}{q}\right|&=\left|\sum_{i=n}^{n+j-1}\left(\Delta x_i-\frac{p}{q}\right)\right|\\
	&\le \sum_{i=n}^{n+j-1}\left|\Delta x_i-\frac{p}{q}\right|\le  \frac{2}{q}\label{x25}
\end{align}
for all $1\le j\le q$.  Thus, using assumption $q>\varepsilon^{-1}$, for the sequences
\begin{align}
	&Y=\{y_j=x_{n+j}-x_n:\, j=1,2,\ldots ,q\},\\
	&Z=\left\{z_j=\frac{jp}{q}:\, j=1,2,\ldots ,q\right\}
\end{align}
we have $|y_j-z_j|<2/q<2\varepsilon$ and, applying \lem{L2}, we get 
\begin{equation}
	\ZD(Y)\le \ZD(Z)+4\varepsilon.
\end{equation}
On the other hand, since $p$ and $q$ are coprime integers, we have
\begin{equation*}
	\bar Z=\left\{\left\{\frac{jp}{q}\right\}:\, j=1,2,\ldots ,q\right\}=	\left\{\frac{s}{q}:\, s=0,1,\ldots ,q-1\right\}
\end{equation*}
and so $\ZD(Z)=\ZD(\bar Z)=1/q<\varepsilon$. Thus, we obtain
\begin{equation}
	\ZD\{x_{n+j}:\, j=1,2,\ldots ,q\}=\ZD(Y)\le \ZD(Z)+4\varepsilon<5\varepsilon.
\end{equation}
Since $q\le n\varepsilon $ (see \e{x5}, \e{a5}), we get \e{x24} with $m=q$.

{\bf Case 2: $q\le \frac{1}{\varepsilon} $.} 
We claim there is an integer $m\le 2n\varepsilon^2$ such that 
\begin{equation}\label{x15}
	\ZD\{x_{n+r},x_{n+r+q},\ldots,x_{n+r+(m-1)q}\}\lesssim \varepsilon,\quad m\le 2n\varepsilon^2,
\end{equation}
for all $1\le r\le q$. Having \e{x15}, we may obtain \e{x24}. Indeed, applying \lem{L8}, from \e{x15} it follows that
\begin{align}
	\ZD\{x_{n+1},x_{n+2},&\ldots,x_{n+qm}\}\\
	&\le \max_{1\le r\le q}\ZD\{x_{n+r},x_{n+r+q},\ldots,x_{n+r+(m-1)q}\}\lesssim \varepsilon.
\end{align}
It remains just note that $mq\le 2q n\varepsilon^2\le 2n\varepsilon$. 
In order to prove \e{x15} consider the sequences
\md0
y_k(r)=-kp+x_{n+r+(k-1)q},\quad k=1,2,\ldots,
\emd
for the parameters $1\le r\le q$. Observe that
\begin{equation}\label{x69}
	\ZD\{x_{n+r},x_{n+r+q},\ldots,x_{n+r+(m-1)q}\}=\ZD\{y_k(r):\, k=1,2,\ldots,m\}.
\end{equation}
Let us state some properties of the sequences $y_k(r)$.

P1) First, observe that
\begin{align}
	\Delta y_k(r)&=y_{k+1}(r)-y_k(r)=-p+x_{n+r+kq}-x_{n+r+(k-1)q}\\
	&=-p+\sum_{j=0}^{q-1}\Delta x_{n+r+(k-1)q+j}\\
	&=-p+\sum_{j=0}^{q-1}\left(\Delta x_{n}+\sum_{i=n}^{n+r+(k-1)q+j-1}\Delta^2x_{i}\right)\\
	&=-p+q\Delta x_{n}+\sum_{j=0}^{q-1}\,\,\sum_{i=n}^{n+r+(k-1)q+j-1}\Delta^2x_{i}\\
	&=\alpha+\sum_{j=0}^{q-1}\,\,\sum_{i=n}^{n+r+(k-1)q+j-1}\Delta^2x_{i}\label{x21},
\end{align}
where
\begin{equation}\label{a11}
	|\alpha|=|q\Delta x_n-p|<\frac{1}{q'}<\varepsilon^{-4} \text{ (see }\e{a6},\e{a5}).
\end{equation}

P2) From \e{x5} and \e{x21} it follows that
\begin{align}
	\Delta^2 y_k(r)&=\sum_{j=0}^{q-1}\,\,\sum_{i=n+r+(k-1)q+j}^{n+r+kq+j-1}\Delta^2x_i\label{x22}\\
	&\ge \frac{q^2}{(n+(k+2)q)^2\varepsilon^{8}}\gtrsim \frac{1}{n^2\varepsilon^8} \text{ if }k\le 3n. \label{x74}
\end{align}
Since $\Delta^2 x_k$ is weakly-decreasing, one can easily check that so is $\Delta^2 y_k(r)$ with the same constant $\ZK$. Moreover, we have
\begin{align}
	& \Delta^2 y_k(r)\le q^2\varepsilon^{12}\le \varepsilon^{10},\quad k=1,2,\ldots,\text{(see }\e{x5}),\label{x70}\\
	& \Delta^2 y_k(r)\le \ZK\Delta^2 y_s(r')\text{ if }r'+sq\le r+kq,\, 1\le r,r'\le q.\label{x67}\\
\end{align}

P3) From \e{x21} we can also see that
\begin{align}
	&\Delta y_k(r')\le \Delta y_k(r)\le \Delta y_{k+1}(r'), \quad 0\le r'\le r\le q,\label{x56}\\
	&\Delta y_{k+1}(r)=\Delta y_k(r+q).\label{x57}
\end{align}
Then, using also the representations
\begin{align*}
	&y_k(r)-y_s(r)=\Delta y_s(r)+\Delta y_{s+1}(r)+\ldots+\Delta y_{k-1}(r),\\
	&y_k(0)-y_s(0)=\Delta y_s(0)+\Delta y_{s+1}(0)+\ldots+\Delta y_{k-1}(0),
\end{align*}
we conclude
\begin{equation}\label{x62}
	y_k(0)-y_s(0)\le y_k(r)-y_s(r)\le y_{k+1}(0)-y_s(0)-\Delta y_s(0).
\end{equation}

P4) Note that all the terms in \e{x21} are non-negative except $\alpha$, which can be either positive or negative. 
Besides, using \e{x56}, one can check that  $\Delta y_k(r)\nearrow \beta$, where $\beta$ doesn't depend on $r$ ($\beta$ can also be infinite). In the case $\beta>0$ we may find an integer $h(r)\ge 0$ such that 
\begin{align}
	\Delta y_k(r)\le 0&\hbox{ if } k\le  h(r),\label{x28}\\
		\Delta y_k(r)\ge 0&\hbox{ if }k> h(r).\label{x32}
\end{align}
If $\beta\le 0$, then we set $h(r)=\infty$. From \e{x56} and \e{x57}, we can conclude that $h(r)$ is non-increasing in $r$ and $h(0)=h(q)+1$. So we have 
\begin{equation}\label{x58}
	h(0)\ge h(r)\ge h(0)-1,\quad 1\le r\le q.
\end{equation}
Taking into account \e{x21} and \e{a11}, so we can write
\begin{equation}\label{x18}
	|\Delta y_k(r)|<|\alpha| <\varepsilon^4,\text{ whenever }k\le h(r).
\end{equation}

\vspace{5mm}
We will consider the following three possible sub-cases of Case 2, applying some restrictions to the number $h(0)$.

{\bf Case 2.1: $h(0)>n\varepsilon^2$.} Choose $m=[n\varepsilon^2]-1<n/2$ (see \e{x72}). From \e{x58} we get $h(r)>m$. So according to \e{x28} the sequence $\{y_k(r):\, k=1,2,\ldots,m\}$ satisfies the conditions of \lem{L3}, namely, $\Delta y_k(r)$ is increasing and negative. 
On the other hand, using \e{x74}, we can write
\begin{equation}\label{x44}
	\min_{1\le k\le m}\Delta^2 y_k(r)\gtrsim \frac{1}{n^2\varepsilon^8}.
\end{equation}
First, applying  \lem{L3}, we obtain
\begin{equation}\label{x16}
	\#(\{y_1(r),y_2(r),\ldots, y_m(r)\}\cap [k,k+1])\lesssim \frac{1}{\sqrt{(n^2\varepsilon^8)^{-1}}}= n\varepsilon^4.
\end{equation}
Then, applying \lem{L5} (see also \e{x68}) together with \e{x69} and \e{x18}, we conclude
\begin{align}
		\ZD\{x_{n+r},x_{n+r+q},\ldots,x_{n+r+(m-1)q}\}&=\ZD\{y_k(r):\, k=1,2,\ldots,m\}\\
		&\lesssim \max_{1\le j< m}|\Delta y_j(r)|+\frac{n\varepsilon^4}{m}\lesssim \varepsilon^{4} +\varepsilon^2\lesssim\varepsilon
\end{align}
that gives \e{x15} . 

{\bf Case 2.2:} $1/\varepsilon\sqrt{\delta}<h(0)\le n\varepsilon^2$, where 
\begin{equation}\label{x71}
	\delta=\Delta^2 y_{h(0)-1}(0)\lesssim \varepsilon^{10} \text{ (see } \e{x70}).
\end{equation}
In this case we will choose $m=h(0)-2$. Using the weakly-decreasing property of $\Delta^2 y_k(r)$ and relation \e{x67}, we can write 
\begin{align}
	&\min_{1\le k\le m}\Delta^2 y_k(r)\ge \frac{\Delta^2 y_{m}(r)}{\ZK}\ge \frac{\Delta^2 y_{m+1}(0)}{\ZK^2}\\
	&\qquad\qquad\qquad\qquad\qquad\qquad=\frac{\Delta^2 y_{h(0)-1}(0)}{\ZK^2}= \frac{\delta}{\ZK^2}
\end{align}
By \e{x28} we have $m<h(r)$. Therefore the sequence $\{y_1(r),y_2(r),\ldots, y_m(r)\}$ satisfies the conditions of \lem{L3}, since $\Delta y_{k}(r)$ is increasing and  by \e{x58} $\Delta y_{k}(r)\le 0$ for $k=1,2,\ldots,m$.  Thus, applying \lem {L3}, we conclude
\begin{equation}
	\#(\{y_1(r),y_2(r),\ldots, y_m(r)\}\cap [k,k+1])\lesssim \frac{1}{\sqrt{\delta}}.
\end{equation}
Once again applying \lem{L5} (with \e{x68}), \e{x18} as well as the assumption 
\begin{equation}
	m=h(0)-2>1/\varepsilon\sqrt{\delta}-2\gtrsim 1/\varepsilon\sqrt{\delta},
\end{equation} 
we get
\begin{align}
\{y_k(r):\, &k=1,2,\ldots,m\}\\
	&\lesssim \varepsilon^4+\frac{\max_{k\in \ZZ}\#(\{y_1(r),y_2(r),\ldots, y_m(r)\}\cap [k,k+1])}{m}\lesssim\varepsilon
\end{align}
and so \e{x15}.

{\bf Case 2.3: $h(0)\le \min\left\{\frac{1}{\varepsilon\sqrt{\delta}},n\varepsilon^2\right\}$.}
Observe that 
\begin{equation}\label{x59}
	h(0)-1\le h(r)\le h(0)\le \min\left\{\frac{1}{\varepsilon\sqrt{\delta}},n\varepsilon^2\right\} \text{ (see }\e{x58})
\end{equation}
and the sequence 
\begin{equation}\label{x61}
	Y=\{y_{k}(r):\,h(0)+1\le k\le  l=2[n\varepsilon^2]\}
\end{equation}
satisfies the conditions of Lemmas \ref{L3} and \ref{L6}, since both $y_{k}(r)$ and $\Delta y_{k}(r)$ are increasing in this range according to \e{x32} and  \e{x58}. We know that the sequence $\Delta^2 y_k$ is weakly-decreasing, so the sequence $Y$ satisfies also conditions of \lem{L1}. Besides, we have
\begin{align}
	&\min_{h(0)+1\le  k\le l}|\Delta^2y_k(r)|\gtrsim\frac{ 1}{n^2\varepsilon^8}, \,(\text{see }\e{x74})\label{x29}\\
	&\max_{h(0)+1\le  k\le l}|\Delta^2y_k(r)|\le\ZK\Delta^2 y_{h(0)-1}(0)=\ZK \delta,(\text{see }\e{x67})\label{x30}\\
	&\min_{k\ge h(0)+1}\Delta y_k(r)= \Delta y_{h(0)+1}(r)\\
	&\qquad\qquad\qquad\quad=\Delta y_{h(0)-1}(r)+\Delta^2 y_{h(0)-1}(r)+\Delta^2 y_{h(0)}(r)\\
	&\qquad\qquad\qquad\quad\le\Delta^2 y_{h(0)-1}(r)+\Delta^2 y_{h(0)}(r)\, (\text{see }  \e{x28}, \e{x59})\\
	&\qquad\qquad\qquad\quad\le\ZK(\Delta^2 y_{h(0)-1}(0)+\Delta^2 y_{h(0)}(0))\, (\text{see }  \e{x69})\\
	&\qquad\qquad\qquad\quad\le \ZK(\ZK+1)\Delta^2 y_{h(0)-1}(0)=\ZK(\ZK+1)\delta,\\
	& \qquad\qquad\qquad\qquad \text{ whenever }k\ge h(0)+1.\label{x31}
\end{align}
Using  \lem{L3} with $[a,b]=[y_{h(r)+1}(r),y_{l}(r)],$ as well as bounds \e{x59} and \e{x29}, we obtain
\begin{equation}
	y_{l}(r)-y_{h(r)+1}(r)\gtrsim (l-h(r))^2\cdot \frac{1}{n^2\varepsilon^8}\gtrsim (n\varepsilon^2)^2\cdot \frac{1}{n^2\varepsilon^8}= \varepsilon^{-4}.
\end{equation}
Then since sequence \e{x61} is increasing, we may find an integer $m\le l$ such that
\begin{equation}\label{x63}
	y_{m-1}(0)-y_{h(0)+1}(0)\lesssim \varepsilon^{-4}\lesssim y_{m}(0)-y_{h(0)+1}(0).
\end{equation}
From \e{x62} and \e{x63} we get
\begin{align}
	&y_{m-2}(r)-y_{h(0)+1}(r)\le y_{m-1}(0)-y_{h(0)+1}(0)-\Delta y_{h(0)+1}(0)\\
	&\qquad\qquad  \le  y_{m-1}(0)-y_{h(0)+1}(0)\lesssim   \varepsilon^{-4},\label{x64}\\
	& y_{m}(r)-y_{h(0)+1}(r) \ge  y_{m}(0)-y_{h(0)+1}(0)\gtrsim \varepsilon^{-4}.\label{x65}
\end{align}
From \e{x64} and \e{x70} it follows that 
\begin{align}
	&\Delta y_{m-3}(r)\le \Delta y_{h(0)+1}(r)+\Delta y_{h(0)+2}(r)+\ldots+\Delta y_{m-3}(r)\\
	&\qquad\qquad=y_{m-2}(r)-y_{h(0)+1}(r)\lesssim \varepsilon^{-4},\\
	&\Delta y_{m-2}(r)\le \Delta y_{m-1}(r)=\Delta y_{m-3}(r)+\Delta^2 y_{m-3}(r)+\Delta^2 y_{m-2}(r)\lesssim \varepsilon^{-4}.\label{x75}
\end{align}
Thus, using \e{x64} and \e{x75}, we obtain
\begin{equation}\label{x66}
	y_{m}(r)-y_{h(0)+1}(r)=y_{m-2}(r)-y_{h(0)+1}(r)+\Delta y_{m-2}(r)+\Delta y_{m-1}(r)\lesssim \varepsilon^{-4}.
\end{equation}
From bounds \e{x30} and \e{x31} we obtain
\begin{equation}
\max \left\{	\max_{h(0)+1\le  k\le l}|\Delta^2y_k(r)|,\min_{h(0)+1\le k\le l}|\Delta y_k(r)|\right\}\lesssim\delta.
\end{equation}
Thus, applying \lem{L6} together with \e{x65},  we obtain
\begin{equation}
	m^2\delta\ge  (m-h(0))^2\delta\gtrsim y_m(r)-y_{h(0)+1}(r)\ge \varepsilon^{-4}
\end{equation}
and then, combining the restriction $h(r)\le \frac{1}{\varepsilon\sqrt{\delta}}$ (see \e{x59}), we obtain
\begin{equation}\label{x43}
	m\ge \frac{1}{\varepsilon^2\sqrt{\delta}}>\frac{h(r)}{\varepsilon}\gtrsim \frac{h(0)}{\varepsilon} .
\end{equation}
Applying \lem{L1} and bounds \e{x71}, \e{x65}, \e{x66}, we get
\begin{align*}
	\ZD\{y_{h(0)+1}(r),y_{h(0)+2}(r),\ldots, y_{m}(r)\}&\lesssim \frac{y_{m}(r)-y_{h(0)+1}(r)}{m}+\frac{1}{\sqrt{y_{m}(r)-y_{h(0)+1}(r)}}\\
	&\lesssim \varepsilon^{-4}\cdot \varepsilon^2\sqrt\delta+\varepsilon^2\lesssim \varepsilon.
\end{align*}
Finally, applying \e{x43}, we obtain
\begin{align}
	\ZD\{y_1(r),y_{2}(r),&\ldots, y_{m}(r)\}\\
	&\le \frac{h(0)}{m}+	\ZD\{y_{h(0)+1}(r),y_{h(0)+2}(r),\ldots, y_{m}(r)\}\lesssim \varepsilon.
\end{align}
Taking into account \e{x69} and that $m\le l\le 2n\varepsilon^2$, this completes the proof of \e{x15} and so the theorem.
\end{proof}
\begin{bibdiv}
	\begin{biblist}
		\bib{Buc}{book}{
			author={Buh\v{s}tab, A. A.},
			title={{Teoriya chisel}},
			language={Russian},
			edition={Second corrected edition},
			publisher={Izdat.  Prosve\v{s}\v{c}enie, Moscow},
			date={1966},
			pages={384},
			review={\MR{204343}},
		}
		\bib{DT}{book}{
			author={Drmota, Michael},
			author={Tichy, Robert F.},
			title={Sequences, discrepancies and applications},
			series={Lecture Notes in Mathematics},
			volume={1651},
			publisher={Springer-Verlag, Berlin},
			date={1997},
			pages={xiv+503},
			isbn={3-540-62606-9},
			review={\MR{1470456}},
			doi={10.1007/BFb0093404},
		}
	\bib{EiTh}{book}{
		author={Einsiedler, Manfred},
		author={Ward, Thomas},
		title={Ergodic theory with a view towards number theory},
		series={Graduate Texts in Mathematics},
		volume={259},
		publisher={Springer-Verlag London, Ltd., London},
		date={2011},
		pages={xviii+481},
		isbn={978-0-85729-020-5},
		review={\MR{2723325}},
		doi={10.1007/978-0-85729-021-2},
	}
\bib{GK}{book}{
	author={Graham, S. W.},
	author={Kolesnik, G.},
	title={van der Corput's method of exponential sums},
	series={London Mathematical Society Lecture Note Series},
	volume={126},
	publisher={Cambridge University Press, Cambridge},
	date={1991},
	pages={vi+120},
	isbn={0-521-33927-8},
	review={\MR{1145488}},
	doi={10.1017/CBO9780511661976},
}
\bib{Kok}{book}{
	author={Koksma, J. F.},
	title={Diophantische Approximationen},
	language={German},
	note={Reprint},
	publisher={Springer-Verlag, Berlin-New York},
	date={1974},
	pages={viii+157},
	review={\MR{344200}},
}
		\bib{KN}{book}{
			author={Kuipers, L.},
			author={Niederreiter, H.},
			title={Uniform distribution of sequences},
			series={Pure and Applied Mathematics},
			publisher={Wiley-Interscience [John Wiley \& Sons], New
				York-London-Sydney},
			date={1974},
			pages={xiv+390},
			review={\MR{419394}},
		}
	\bib{Tit}{book}{
		author={Titchmarsh, E. C.},
		title={The theory of the Riemann zeta-function},
		edition={2},
		note={Edited and with a preface by D. R. Heath-Brown},
		publisher={The Clarendon Press, Oxford University Press, New York},
		date={1986},
		pages={x+412},
		isbn={0-19-853369-1},
		review={\MR{882550}},
	}
	\bib{VDC1}{article}{
		author={van der Corput, J. G.},
		title={Zahlentheoretische Absch\"{a}tzungen},
		language={German},
		journal={Math. Ann.},
		volume={84},
		date={1921},
		number={1-2},
		pages={53--79},
		issn={0025-5831},
		review={\MR{1512020}},
		doi={10.1007/BF01458693},
	}
\bib{VDC2}{article}{
	author={van der Corput, J. G.},
	title={Versch\"{a}rfung der Absch\"{a}tzung beim Teilerproblem},
	language={German},
	journal={Math. Ann.},
	volume={87},
	date={1922},
	number={1-2},
	pages={39--65},
	issn={0025-5831},
	review={\MR{1512099}},
	doi={10.1007/BF01458035},
}
\bib{VDC3}{article}{
	author={van der Corput, J. G.},
	title={Zum Teilerproblem},
	language={German},
	journal={Math. Ann.},
	volume={98},
	date={1928},
	number={1},
	pages={697--716},
	issn={0025-5831},
	review={\MR{1512430}},
	doi={10.1007/BF01451619},
}
\bib{VDC4}{article}{
	author={van der Corput, J. G.},
	title={Diophantische Ungleichungen. I. Zur Gleichverteilung Modulo Eins},
	language={German},
	journal={Acta Math.},
	volume={56},
	date={1931},
	number={1},
	pages={373--456},
	issn={0001-5962},
	review={\MR{1555330}},
	doi={10.1007/BF02545780},
}
    \end{biblist}
\end{bibdiv}
\end{document}